\newtheorem{theorem}{Theorem}
\newtheorem{problem}{Problem}
\newtheorem{corollary}{Corollary}
\newtheorem{lemma}{Lemma}
\begin{document}

\title{\large\bf Weighted representation functions on $\mathbb{Z}_m$\footnote{This work was supported by the National
Natural Science Foundation of China, Grant No. 11071121 and  the
Project of Graduate Education Innovation of Jiangsu Province
(CXZZ12-0381). }}
\date{}
\author{Quan-Hui Yang and Yong-Gao Chen\footnote{Corresponding author. Email:yangquanhui01@163.com, ygchen@njnu.edu.cn}\\
\small School of Mathematical Sciences and Institute of
Mathematics,
\\ \small Nanjing Normal University,  Nanjing 210046, P. R. CHINA}
 \maketitle
\vskip 3mm

\begin{abstract}

Let $m$, $k_1$, and $k_2$ be three integers with $m\ge 2$. For any
set $A\subseteq \mathbb{Z}_m$ and $n\in \mathbb{Z}_m$, let
$\hat{r}_{k_1,k_2}(A,n)$ denote the number of solutions of the
equation $n=k_1a_1+k_2a_2$ with $a_1,a_2\in A$. In this paper,
using exponential sums, we characterize all $m$, $k_1$, $k_2$, and
$A$ for which
$\hat{r}_{k_1,k_2}(A,n)=\hat{r}_{k_1,k_2}(\mathbb{Z}_m\setminus
A,n)$ for all $n\in \mathbb{Z}_m$. We also pose several problems
for further research.

{\it 2010 Mathematics Subject Classifications:} 11B34

{\it Key words and phrases:} exponential sums, representation functions.

\end{abstract}

\section{Introduction}

Let $\mathbb{N}$ be the set of nonnegative integers. For a set
$A\subseteq \mathbb{N},$ let $R_1(A,n)$, $R_2(A,n),$ $R_3(A,n)$
denote the number of solutions of $a+a'=n, a,a'\in A$; $a+a'=n,
a,a'\in A,a<a'$ and $a+a'=n, a,a'\in A,a\leqslant a'$
respectively. For $i\in \{1,2,3\},$ S\'{a}rk\"{o}zy asked ever
whether there are sets $A$ and $B$ with infinite symmetric
difference such that $R_i(A,n)=R_i(B,n)$ for all sufficiently
large integers $n$. It is known that the answer is negative for
$i=1$ (see Dombi \cite{Dombi}) and the answer is positive for
$i=2,3$ (see Dombi \cite{Dombi}, Chen and Wang \cite{chen03}). In
fact, Dombi \cite{Dombi}
 for $i=2$ and Chen and Wang \cite{chen03} for $i=3$ proved that there exists a set $A\subseteq \mathbb{N}$
 such that $R_i(A,n)=R_i(\mathbb{N}\setminus A,n)$ for all $n\geq n_0.$ Lev (see \cite{lev}) gave a simple common proof to the
results by Dombi \cite{Dombi} and Chen and Wang \cite{chen03}.
Finally, using generating functions, S\'{a}ndor \cite{Sandor} gave
a complete answer by using generating functions, and later Tang
\cite{tang} gave an elementary proof. For related research, one
may refer to \cite{ChenSci2011} and \cite{ChenTangJNT}. For a
positive integer $m$, let $\mathbb{Z}_m$ be the set of residue
classes modulo $m.$ For the modular version, the first author and
Chen \cite{yang} proved that if and only if $m$ is even, there
exists $A\in \mathbb{Z}_m$ such that
$R_1(A,n)=R_1(\mathbb{Z}_m\setminus A,n)$ for all $n\in
\mathbb{Z}_m$.

For any given two positive integers $k_1,k_2$ and any set $A$ of
nonnegative integers, let $r_{k_1,k_2}(A,n)$ denote the number of
solutions of the equation $n=k_1a_1+k_2a_2$ with $a_1,a_2\in A$.
Recently, the authors \cite{chenyang} proved that there exists a
set $A\subseteq \mathbb{N}$ such that $r_{k_1, k_2}(A, n)=r_{k_1,
k_2}(\mathbb{N}\setminus A,n)$ for all sufficiently large integers
$n$ if and only if $k_1\mid k_2$ and $k_2>k_1$.

For any given $t$ integers $k_1,\cdots,k_t$, and any set
$A\subseteq \mathbb{Z}_m$ and $n\in \mathbb{Z}_m$, let
$\hat{r}_{k_1,\cdots,k_t}(A,n)$ denote the number of solutions of
the equation $n=k_1a_1+\cdots+k_ta_t$ with $a_1,\ldots,a_t\in A$.
In this paper, we prove the following theorem.

\begin{theorem}\label{mainthm1} Let $m,k_1$, and $k_2$ be three integers
with $m\geq 2$, and let $A\subseteq \mathbb{Z}_m$.  Then
$\hat{r}_{k_1,k_2}(A,n)=\hat{r}_{k_1,k_2}(\mathbb{Z}_m\setminus
A,n)$ for all $n\in \mathbb{Z}_m$ if and only if $|A|=m/2$ and $A$
is uniformly distributed modulo $d_1d_2/d_3^2$, where
$(k_1,m)=d_1,(k_2,m)=d_2$, and $(d_1,d_2)=d_3$.\end{theorem}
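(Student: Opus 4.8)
The plan is to attack this with the discrete Fourier transform on $\mathbb{Z}_m$. For $A\subseteq\mathbb{Z}_m$ set $f_A(t)=\sum_{a\in A}e^{2\pi i at/m}$. First I would record the inversion formula
\[
\hat r_{k_1,k_2}(A,n)=\frac1m\sum_{t=0}^{m-1}f_A(k_1t)\,f_A(k_2t)\,e^{-2\pi i nt/m},
\]
which follows by expanding $\hat r$ as a double sum over $a_1,a_2\in A$ and invoking orthogonality of the additive characters. Since this exhibits $t\mapsto f_A(k_1t)f_A(k_2t)$ as the Fourier coefficients of $n\mapsto\hat r_{k_1,k_2}(A,n)$, uniqueness of Fourier coefficients shows that $\hat r_{k_1,k_2}(A,n)=\hat r_{k_1,k_2}(\mathbb{Z}_m\setminus A,n)$ for all $n$ is \emph{equivalent} to
\[
f_A(k_1t)\,f_A(k_2t)=f_B(k_1t)\,f_B(k_2t)\qquad(0\le t\le m-1),
\]
where $B=\mathbb{Z}_m\setminus A$. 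This single reduction handles both directions of the ``if and only if'' simultaneously, which is why I favour it over a direct combinatorial count.

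Next I would eliminate $B$ using $f_A(s)+f_B(s)=\sum_{x\in\mathbb{Z}_m}e^{2\pi i xs/m}$, so that $f_B(s)=-f_A(s)$ for $s\not\equiv0\pmod m$, while $f_A(0)=|A|$ and $f_B(0)=m-|A|$. Substituting this into the product identity and splitting according to whether $k_1t$ and $k_2t$ vanish modulo $m$ produces a clean trichotomy. When both $k_1t\equiv k_2t\equiv0$ (in particular at $t=0$) the identity reads $|A|^2=(m-|A|)^2$, forcing $|A|=m/2$ (so $m$ is even). When neither vanishes, the two minus signs cancel and both sides equal $f_A(k_1t)f_A(k_2t)$, giving no constraint. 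In each mixed case the signs no longer cancel and the identity collapses to $m\,f_A(k_2t)=0$ (respectively $m\,f_A(k_1t)=0$); hence $f_A(k_2t)=0$ whenever $k_1t\equiv0,\ k_2t\not\equiv0$, and $f_A(k_1t)=0$ whenever $k_2t\equiv0,\ k_1t\not\equiv0$.

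It then remains to identify these vanishing conditions arithmetically. Writing $k_1=d_1k_1'$ with $\gcd(k_1',m/d_1)=1$ gives $k_1t\equiv0\pmod m\iff (m/d_1)\mid t$, and similarly $k_2t\equiv0\pmod m\iff(m/d_2)\mid t$. The key input is the identity $\gcd(k_2,d_1)=\gcd(k_1,d_2)=d_3$: any common divisor of $k_2$ and $d_1\mid m$ divides $\gcd(k_2,m)=d_2$ and hence $d_3$, while conversely $d_3\mid d_1$ and $d_3\mid d_2\mid k_2$. Consequently $\gcd\bigl(k_2\cdot m/d_1,\,m\bigr)=(m/d_1)d_3$, so as $t$ ranges over the multiples of $m/d_1$ the values $k_2t$ surject onto the cyclic subgroup generated by $(m/d_1)d_3$. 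Moreover, for such $t$ one has $k_2t\equiv0\iff(m/d_2)\mid t$, so the \emph{excluded} residues are exactly the ones mapping to the frequency $0$. Therefore the first family of conditions is precisely the statement that $f_A$ vanishes at every nonzero multiple of $(m/d_1)d_3=m/(d_1/d_3)$, and symmetrically the second says $f_A$ vanishes at every nonzero multiple of $m/(d_2/d_3)$.

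Finally I would translate exponential sums back into counting: vanishing of $f_A$ at all nonzero multiples of $m/\ell$ is equivalent to $A$ being uniformly distributed modulo $\ell$. Applying this with $\ell=d_1/d_3$ and $\ell=d_2/d_3$ shows the whole system is equivalent to $A$ being uniformly distributed modulo $d_1/d_3$ and modulo $d_2/d_3$ simultaneously; since $\gcd(d_1/d_3,d_2/d_3)=\gcd(d_1,d_2)/d_3=1$, these coprime conditions assemble into uniform distribution modulo $(d_1/d_3)(d_2/d_3)=d_1d_2/d_3^2$. Combined with $|A|=m/2$, this is exactly the asserted characterization. I expect the main obstacle to be the third paragraph: one must check that the excluded residues $t$ contribute nothing new, i.e.\ that they map onto precisely the frequency $0$, so that the forced-vanishing set is exactly the punctured subgroup---neither smaller (which would weaken the condition) nor larger; the coprimality of $d_1/d_3$ and $d_2/d_3$ is what guarantees the two punctured subgroups combine into the stated equidistribution modulus.
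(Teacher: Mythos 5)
Your Fourier reduction is correct and is in fact organized more cleanly than the paper's own proof: where the paper computes $\hat{r}_{k_1,k_2}(A,n)-\hat{r}_{k_1,k_2}(B,n)$ termwise through four lemmas and then manipulates the resulting counting identity in $n$, you invoke invertibility of the DFT once and reduce everything to the pointwise identity $f_A(k_1t)f_A(k_2t)=f_B(k_1t)f_B(k_2t)$. Your trichotomy and the identification of the forced-vanishing frequencies are also correct: the equality of representation functions is equivalent to $|A|=m/2$ together with $f_A$ vanishing at every nonzero multiple of $m/(d_1/d_3)$ and at every nonzero multiple of $m/(d_2/d_3)$, i.e.\ to $A$ being uniformly distributed modulo $d_1/d_3$ and modulo $d_2/d_3$. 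The genuine gap is your very last step: uniform distribution modulo two coprime moduli $u$ and $v$ does \emph{not} imply uniform distribution modulo $uv$. In Fourier terms, uniformity modulo $uv$ requires $f_A(ms/(uv))=0$ for all $s\not\equiv 0\pmod{uv}$, whereas your two conditions only kill the frequencies with $u\mid s$ or $v\mid s$; frequencies with $s$ divisible by neither remain completely unconstrained. Concretely, take $m=12$, $k_1=2$, $k_2=3$, so that $d_1=2$, $d_2=3$, $d_3=1$, $d_1d_2/d_3^2=6$, and let $A=\{0,1,2,6,7,11\}$. This $A$ is uniformly distributed modulo $2$ and modulo $3$ but not modulo $6$ (the classes $3$ and $4$ mod $6$ are empty), yet a direct count gives $\hat{r}_{2,3}(A,n)=\hat{r}_{2,3}(\mathbb{Z}_{12}\setminus A,n)$ for every $n$ (both sequences are $2,3,2,2,3,2,4,3,4,4,3,4$ for $n=0,\dots,11$); equivalently, $f_A(2)=3+i\sqrt{3}\neq 0$ even though $f_A(4)=f_A(6)=f_A(8)=0$.

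You should know that the paper's proof commits exactly the same unjustified leap at the corresponding point (``Since $(d_1',d_2')=1$, the set $A$ is uniformly distributed modulo $d_1'd_2'$''), so you have not missed an idea that the authors supply --- you have faithfully reproduced a flaw that is already in the published argument, and the example above shows the ``only if'' direction of the theorem as stated is false. What the sound portion of your argument (and of the paper's) actually proves is the corrected statement: $\hat{r}_{k_1,k_2}(A,n)=\hat{r}_{k_1,k_2}(\mathbb{Z}_m\setminus A,n)$ for all $n\in\mathbb{Z}_m$ if and only if $|A|=m/2$ and $A$ is uniformly distributed modulo $d_1/d_3$ and modulo $d_2/d_3$ separately. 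The ``if'' direction of the stated theorem does survive, since uniformity modulo $d_1d_2/d_3^2$ implies uniformity modulo each of the two factors; to repair your write-up you should either weaken the conclusion as above or exhibit the missing implication, which the counterexample shows cannot be done.
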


\begin{corollary}\label{cor1} Let $m,k_1$, and $k_2$ be three integers
with $m\geq 2$. Then there exists a set $A\subseteq \mathbb{Z}_m$
such that
$\hat{r}_{k_1,k_2}(A,n)=\hat{r}_{k_1,k_2}(\mathbb{Z}_m\setminus
A,n)$ for all $n\in \mathbb{Z}_m$ if and only if $m$ is even and
one of the following statements is true:

(i) $k_1$ and $k_2$ have the same parity;

(ii) $k_1$ and $k_2$ have the different parities with $v_2(k_i) <
v_2 (m) (i=1,2)$, where $v_2(k)=t$ if $2^t\mid k$ and
$2^{t+1}\nmid k$.
\end{corollary}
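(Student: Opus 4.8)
The plan is to read Corollary~\ref{cor1} off Theorem~\ref{mainthm1} by a purely arithmetic analysis. Write $d_1=(k_1,m)$, $d_2=(k_2,m)$, $d_3=(d_1,d_2)$ and set $q=d_1d_2/d_3^2$. By Theorem~\ref{mainthm1}, a set $A$ with the prescribed representation-function property exists if and only if there exists $A\subseteq\mathbb{Z}_m$ with $|A|=m/2$ that is uniformly distributed modulo $q$, so the entire corollary reduces to deciding, in terms of $m,k_1,k_2$, when such an $A$ can exist. The first small step is to verify $q\mid m$ so that the notion of uniform distribution is clean. Comparing $p$-adic valuations and writing $a=v_p(k_1)\le b=v_p(k_2)$ and $c=v_p(m)$, one computes $v_p(d_1)=\min(a,c)$, $v_p(d_2)=\min(b,c)$, $v_p(d_3)=\min(a,c)$, hence $v_p(q)=\min(b,c)-\min(a,c)\le c$ for every prime $p$; thus $q\mid m$ and the $q$ residue classes modulo $q$ partition $\mathbb{Z}_m$ into $q$ blocks of equal size $m/q$.

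Next I would settle the combinatorial existence question: a set $A\subseteq\mathbb{Z}_m$ with $|A|=m/2$ that is uniformly distributed modulo $q$ exists if and only if $2q\mid m$. Indeed, uniform distribution forces each of the $q$ classes to contain the same number $c$ of elements of $A$, so $|A|=qc$; the equation $qc=m/2$ is solvable in nonnegative integers $c$ (automatically $c\le m/q$) precisely when $m$ is even and $q\mid m/2$, that is, when $2q\mid m$. Conversely, when $2q\mid m$ one simply selects $m/(2q)$ elements from each class. Hence the corollary is now equivalent to the single arithmetic condition $2q\mid m$.

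It remains to translate $2q\mid m$ into statements (i) and (ii). Since $q\mid m$ already holds, $2q\mid m$ is equivalent to $v_2(q)<v_2(m)$, which in particular forces $m$ to be even. Using the formula $v_2(q)=\min(b,c)-\min(a,c)$ with $a=v_2(k_1)\le b=v_2(k_2)$ and $c=v_2(m)$, I would argue as follows. When $k_1,k_2$ have the same parity, either $a=b=0$, giving $v_2(q)=0$, or $a,b\ge 1$, giving $\min(a,c)\ge 1$ and hence $v_2(q)\le c-1$; in both cases $v_2(q)<c$ once $m$ is even, which matches (i). When $k_1,k_2$ have different parities we have $a=0<b$, so $v_2(q)=\min(b,c)$, and $v_2(q)<c$ holds exactly when $b<c$, i.e. when $v_2(k_i)<v_2(m)$ for $i=1,2$, matching (ii). I expect this mixed-parity case to be the crux: it is precisely the boundary $v_2(k_2)\ge v_2(m)$ that produces the equality $v_2(q)=v_2(m)$ and destroys $2q\mid m$, so the whole corollary hinges on getting this valuation inequality right, and keeping the various orderings of $a$, $b$, $c$ straight is where the bookkeeping must be done with care.
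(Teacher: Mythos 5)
Your proposal is correct and follows essentially the same route as the paper: reduce via Theorem~\ref{mainthm1} to the existence of a set of size $m/2$ uniformly distributed modulo $q=d_1d_2/d_3^2$, observe that this is equivalent to the arithmetic condition $2q\mid m$, and then translate that condition into the parity statements (i) and (ii) by a $2$-adic valuation analysis. Your packaging of the case analysis through the single identity $v_2(q)=\min(b,c)-\min(a,c)$ and the criterion $v_2(q)<v_2(m)$ is a mild streamlining of the paper's three explicit cases, but it is not a genuinely different argument.
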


Motivated by Lev \cite{lev} and the authors \cite{chenyang}, we
now pose the following problems for further research.

\begin{problem}\label{prob1} For any given two integers $k_1$ and $k_2$,
determine all pairs of subsets $A,B\subseteq \mathbb{Z}_m$ such that
$\hat{r}_{k_1,k_2}(A,n)=\hat{r}_{k_1,k_2}(B,n)$ for all $n\in \mathbb{Z}_m$.
\end{problem}

\begin{problem}\label{prob2} For $t\geq 3$, find all $t+1$-tuples $(m, k_1, \dots, k_t)$
of integers for which there exists a set $A\subseteq \mathbb{Z}_m$
such that $\hat{r}_{k_1, \dots, k_t}(A, n)=\hat{r}_{k_1, \dots,
k_t}(\mathbb{Z}_m \setminus A, n)$ for all $n\in \mathbb{Z}_m $.
\end{problem}

\section{Proofs}

For $T\subseteq \mathbb{Z}_m$ and $x\in \mathbb{Z}_m$, let
 $$S_T(x)=\sum_{t\in T}e^{2\pi itx/m}.$$
Let $A\subseteq \mathbb{Z}_m$ and $B =\mathbb{Z}_m\setminus A$.
Then
\begin{equation*}\hat{r}_{k_1,k_2}(A,n)=\sum_{x=0}^{m-1}S_A(k_1x)S_A(k_2x)e^{-2\pi inx/m}
\end{equation*} for all $n\in \mathbb{Z}_m$. Let $g_A (x)=S_A(k_1x)S_A(k_2x)-S_B(k_1x)S_B(k_2x)$. Thus
\begin{equation}\label{basic1}\hat{r}_{k_1,k_2}(A,n)-\hat{r}_{k_1,k_2}(B,n)
=\sum_{x=0}^{m-1}g_A (x) e^{-2\pi inx/m}
\end{equation} for all $n\in \mathbb{Z}_m$.

In order to prove Theorem \ref{mainthm1}, we need the following
Lemmas.
\begin{lemma}\label{lemma1} Let $m,k_1,k_2$ be three  integers with $m\geq 2$.
If $\hat{r}_{k_1,k_2}(A,n)=\hat{r}_{k_1,k_2}(B,n)$ for all $n\in
\mathbb{Z}_m$, then $m$ is even and $|A|=m/2$.
\end{lemma}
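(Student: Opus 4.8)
The plan is to reduce the whole statement to the single value $x=0$ of the exponential-sum identity already recorded in (\ref{basic1}). By hypothesis $\hat{r}_{k_1,k_2}(A,n)=\hat{r}_{k_1,k_2}(B,n)$ for every $n\in\mathbb{Z}_m$, so the left-hand side of (\ref{basic1}) vanishes identically in $n$. Rather than extracting all the coefficients $g_A(x)$ at once, I would only need the information carried by summing over $n$, which is exactly what isolates $g_A(0)$.

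Concretely, I would sum (\ref{basic1}) over all $n\in\mathbb{Z}_m$ and interchange the order of summation, obtaining
$$\sum_{n=0}^{m-1}\bigl(\hat{r}_{k_1,k_2}(A,n)-\hat{r}_{k_1,k_2}(B,n)\bigr)=\sum_{x=0}^{m-1}g_A(x)\sum_{n=0}^{m-1}e^{-2\pi i nx/m}.$$
The orthogonality relation $\sum_{n=0}^{m-1}e^{-2\pi i nx/m}=m$ for $x\equiv 0\pmod m$ and $0$ otherwise collapses the inner sum, so the right-hand side equals $m\,g_A(0)$. Since the left-hand side is $0$ by hypothesis, this forces $g_A(0)=0$. (Equivalently, and as a sanity check, one can note the purely combinatorial identity $\sum_{n}\hat{r}_{k_1,k_2}(A,n)=|A|^2$, obtained by counting each ordered pair $(a_1,a_2)\in A\times A$ against the unique $n=k_1a_1+k_2a_2$ it represents; this gives the same conclusion without invoking characters.)

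It remains to unwind $g_A(0)=0$. Because $S_T(0)=\sum_{t\in T}1=|T|$ for any $T\subseteq\mathbb{Z}_m$, we have $g_A(0)=S_A(0)^2-S_B(0)^2=|A|^2-|B|^2$. Using $B=\mathbb{Z}_m\setminus A$, so that $|B|=m-|A|$, the relation $|A|^2=(m-|A|)^2$ together with the nonnegativity of $|A|$ and $m-|A|$ yields $|A|=m-|A|$, hence $2|A|=m$ and $|A|=m/2$. As $|A|$ is a nonnegative integer, $m$ must be even, which is both assertions of the lemma.

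I do not expect a genuine obstacle here: the content is entirely in recognizing that the hypothesis, although stated for all $n$, only needs to be aggregated once (via the sum over $n$, or equivalently via the constant term of the transform) to pin down the cardinality. The one point requiring a careful word is the justification of the collapse of the double sum, which rests solely on the orthogonality of the additive characters of $\mathbb{Z}_m$ and should be stated explicitly.
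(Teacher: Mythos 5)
Your proof is correct and is essentially the paper's own argument: both reduce the hypothesis to the single aggregate identity $\sum_{n}\hat{r}_{k_1,k_2}(A,n)=\sum_{n}\hat{r}_{k_1,k_2}(B,n)$, deduce $|A|^2=|B|^2$, and conclude $|A|=|B|=m/2$ with $m$ even. The paper obtains the aggregate directly from the combinatorial identity $\sum_n\hat{r}_{k_1,k_2}(A,n)=|A|^2$ (the route you mention parenthetically), while you phrase the same step as extracting $g_A(0)$ by character orthogonality; the two are interchangeable.
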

\begin{proof} If $\hat{r}_{k_1,k_2}(A,n)=\hat{r}_{k_1,k_2}(B,n)$
holds for all $n\in \mathbb{Z}_m$, then we have
$$|A|^2=\sum_{n\in \mathbb{Z}_m}\hat{r}_{k_1,k_2}(A,n)
=\sum_{n\in \mathbb{Z}_m}\hat{r}_{k_1,k_2}(B,n)=|B|^2.$$ Hence we
get $|A|=|B|$, that is, $m$ is even and $|A|=m/2$.
\end{proof}

\begin{lemma}\label{lemma2} If $m\nmid k_ix \, (i=1,2)$, then $g_A(x)=0$.      \end{lemma}

\begin{proof} Since $m\nmid k_ix (i=1,2)$, it follows that
$$S_A(k_1x)+S_B(k_1x)=\sum_{j=0}^{m-1}e^{2\pi ik_1xj/m}=0$$ and $$S_A(k_2x)+S_B(k_2x)=\sum_{j=0}^{m-1}e^{2\pi ik_2xj/m}=0.$$
Hence $g_A(x)=S_A(k_1x)S_A(k_2x)-S_B(k_1x)S_B(k_2x)=0$.
\end{proof}

\begin{lemma}\label{lemma3} If $|A|= m/2$ and $m\mid k_ix \, (i=1,2)$, then $g_A(x)=0$.\end{lemma}

\begin{proof} Since $m\mid k_ix (i=1,2)$, it follows that
$$ S_A(k_1x)=|A|=S_A(k_2x) \quad \text{and} \quad S_B(k_1x)=|B|=S_B(k_2x).$$
Thus $g_A(x)=|A|^2-|B|^2$. By $|A|= m/2$ we have $|B|= m/2$.
Therefore, $g_A(x)=0$.\end{proof}

\begin{lemma}\label{lemma4} If $k$ and $\ell $ are two integers, then
\begin{eqnarray*} \sum_{\substack{x=0\\
m|kx}}^{m-1}S_T(\ell x)e^{-2\pi inx/m} = (k,m) \sum_{\substack{t\in
T\\ (k, m) |\ell t-n}}1.
\end{eqnarray*}\end{lemma}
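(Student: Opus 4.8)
The plan is to reduce the constrained sum over $x$ to a clean sum over a coset of multiples, then expand $S_T$ and evaluate an inner geometric sum. First I would identify the index set: writing $d=(k,m)$, the condition $m\mid kx$ is equivalent to $(m/d)\mid (k/d)x$, and since $(k/d,\,m/d)=1$ this holds precisely when $(m/d)\mid x$. Thus the $x\in\{0,1,\dots,m-1\}$ satisfying $m\mid kx$ are exactly the $d$ values $x=j\,(m/d)$ for $j=0,1,\dots,d-1$, so the constrained sum on the left-hand side becomes an unconstrained sum over $j$ from $0$ to $d-1$.

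Next I would substitute the definition $S_T(\ell x)=\sum_{t\in T}e^{2\pi i\ell t x/m}$ and plug in $x=jm/d$. The key simplification is that both exponentials collapse: $e^{2\pi i\ell t x/m}=e^{2\pi i\ell t j/d}$ and $e^{-2\pi inx/m}=e^{-2\pi inj/d}$, because the factor $m/d$ cancels the $m$ in the denominator. After interchanging the two finite sums, the left-hand side takes the form
$$\sum_{t\in T}\ \sum_{j=0}^{d-1}e^{2\pi i(\ell t-n)j/d}.$$
The inner sum is a geometric series in the $d$-th root of unity $e^{2\pi i(\ell t-n)/d}$, so it equals $d$ when $d\mid(\ell t-n)$ and $0$ otherwise. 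Summing the surviving terms gives $d$ times the number of $t\in T$ with $d\mid(\ell t-n)$, which is exactly the claimed right-hand side with $d=(k,m)$.

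None of these steps is a genuine obstacle; the only points requiring care are the coprimality argument pinning down the index set $\{jm/d\}$ (to be sure there are exactly $d$ admissible values of $x$ and no others), and the standard orthogonality evaluation of the geometric sum, where one must check the two cases $d\mid(\ell t-n)$ and $d\nmid(\ell t-n)$ separately. Everything else is a routine interchange of finite summations.
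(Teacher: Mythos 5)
Your proposal is correct and follows essentially the same route as the paper: expand $S_T$, parametrize the solutions of $m\mid kx$ as $x=jm/d$ with $j=0,\dots,d-1$ where $d=(k,m)$, and finish with the orthogonality of the geometric sum $\sum_{j=0}^{d-1}e^{2\pi i(\ell t-n)j/d}$. The only difference is that you make explicit the coprimality argument identifying the admissible $x$, which the paper's proof leaves implicit.
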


\begin{proof}Let $d=(k,m)$.  Then \begin{eqnarray*} &&\sum_{\substack{x=0 \\ m|kx}}^{m-1}S_T(\ell x)e^{-2\pi inx/m}
  = \sum_{\substack{x=0 \\ m|kx}}^{m-1}\sum_{t\in T }e^{2\pi i(\ell t-n)x/m}\\
 &=&\sum_{s=0}^{d-1}\sum_{t\in T}e^{2\pi i(\ell t-n)s/d}
=d \sum_{\substack{t\in T \\ d|\ell t-n}}1.
\end{eqnarray*} \end{proof}

\begin{proof}[Proof of Theorem 1.]
By Lemma \ref{lemma1} we may assume that $m$ is even and $|A|=|B|=
m/2$. From \eqref{basic1}, by Lemmas \ref{lemma2}-\ref{lemma4}, we
have
\begin{eqnarray*}&&\hat{r}_{k_1,k_2}(A,n)-\hat{r}_{k_1,k_2}(B,n)\\
&=&\sum_{\substack{x=0 \\ m\nmid k_1x, m\nmid k_2x}}^{m-1}g_A (x)
e^{-2\pi inx/m}+\sum_{\substack{x=0 \\ m\mid k_1x}}^{m-1}g_A (x)
e^{-2\pi inx/m}\\
&& +\sum_{\substack{x=0 \\ m\mid k_2x}}^{m-1}g_A (x) e^{-2\pi
inx/m}-\sum_{\substack{x=0 \\ m\mid k_1x, m\mid k_2x}}^{m-1}g_A (x)
e^{-2\pi inx/m}\\
&=&\sum_{\substack{x=0 \\ m\mid k_1x}}^{m-1}g_A (x) e^{-2\pi
inx/m}+\sum_{\substack{x=0 \\ m\mid k_2x}}^{m-1}g_A (x) e^{-2\pi
inx/m}\\
&=&\frac m2 \sum_{\substack{x=0 \\ m\mid k_1x}}^{m-1}\left(
S_A(k_2x)-S_B(k_2x) \right) e^{-2\pi inx/m}\\
&& +\frac m2\sum_{\substack{x=0 \\
m\mid k_2x}}^{m-1}\left( S_A(k_1x)-S_B(k_1x) \right) e^{-2\pi
inx/m}\\
&=& \frac 12 md_1 \left( \sum_{\substack{a\in A \\ d_1|k_2
a-n}}1-\sum_{\substack{b\in B \\ d_1|k_2 b-n}}1 \right) +\frac 12
md_2 \left( \sum_{\substack{a\in A \\ d_2|k_1
a-n}}1-\sum_{\substack{b\in B \\ d_2|k_1 b-n}}1 \right) .
\end{eqnarray*}
It follows that
\begin{equation}\label{rkk}\hat{r}_{k_1,k_2}(A,n)=\hat{r}_{k_1,k_2}(B,n)\end{equation}
is equivalent to
\begin{equation}\label{c}d_1\sum_{\substack{a\in A \\ d_1|k_2a-n}}1+d_2\sum_{\substack{a\in A \\ d_2|k_1a-n}}1
=d_1\sum_{\substack{b\in B \\ d_1|k_2b-n}}1+d_2\sum_{\substack{b\in
B
\\ d_2|k_1b-n}}1.\end{equation}

Suppose that \eqref{rkk} holds for all $n\in \mathbb{Z}_m$. Then
\eqref{c} holds for all $n\in \mathbb{Z}_m$. Thus
\begin{equation}\label{c1}d_1\sum_{\substack{a\in A \\ d_1|k_2a-d_3n}}1+d_2\sum_{\substack{a\in A \\ d_2|k_1a-d_3n}}1
=d_1\sum_{\substack{b\in B \\
d_1|k_2b-d_3n}}1+d_2\sum_{\substack{b\in B
\\ d_2|k_1b-d_3n}}1\end{equation}for all $n\in \mathbb{Z}_m$.
Let
$$d_i=d_3 d_i',\quad k_i=d_3 k_i', \quad i=1,2.$$
From \eqref{c1}, we have
\begin{equation}\label{c2}d_1\sum_{\substack{a\in A \\ d_1'|k_2'a-n}}1+d_2\sum_{\substack{a\in A \\ d_2'|k_1'a-n}}1
=d_1\sum_{\substack{b\in B \\
d_1'|k_2'b-n}}1+d_2\sum_{\substack{b\in B
\\ d_2'|k_1'b-n}}1.\end{equation}
Since $(d_1, k_2)=(k_1, m, k_2)=d_3$, it follows that $(d_1',
k_2')=1$. Similarly, we have that $(d_2', k_1')=1$. Thus the
summation of two sides of \eqref{c2} is
$$ d_1\sum_{t\in \mathbb{Z}_m, d_1'|k_2't-n}1+d_2\sum_{t\in \mathbb{Z}_m, d_2'|k_1't-n}1
=d_1\sum_{t\in \mathbb{Z}_m,d_1'|t}1+d_2\sum_{t\in \mathbb{Z}_m,
d_2'|t}1=C(m, k_1, k_2)$$ (say). By \eqref{c2} we have
\begin{equation}\label{c3}d_1\sum_{\substack{a\in A \\ d_1'|k_2'a-n}}1+d_2\sum_{\substack{a\in A \\ d_2'|k_1'a-n}}1
=\frac 12 C(m, k_1, k_2)\end{equation} for all integers $n$. In
particular,
\begin{equation}\label{c4}d_1\sum_{\substack{a\in A \\ d_1'|k_2'a-d_1'n}}1+d_2\sum_{\substack{a\in A \\ d_2'|k_1'a-d_1'n}}1
=\frac 12 C(m, k_1, k_2)\end{equation} for all integers $n$. That
is,
\begin{equation}\label{c5}d_1\sum_{\substack{a\in A \\ d_1'|k_2'a}}1+d_2\sum_{\substack{a\in A \\ d_2'|k_1'a-d_1'n}}1
=\frac 12 C(m, k_1, k_2)\end{equation} for all integers $n$.
Thus \begin{equation}\label{c6}\sum_{\substack{a\in A \\
d_2'|k_1'a-d_1'n_1}}1=\sum_{\substack{a\in A \\
d_2'|k_1'a-d_1'n_2}}1\end{equation} for all integers $n_1$ and $
n_2$. Since $(d_1, d_2)=d_3$, we see that $(d_1', d_2')=1$.  By
\eqref{c6}, $(d_2', k_1')=1$, and $(d_1', d_2')=1$, we have
\begin{equation}\label{c7}\sum_{\substack{a\in A \\
d_2'|a-u_1}}1=\sum_{\substack{a\in A \\
d_2'|a-u_2}}1\end{equation} for all integers $u_1$ and $ u_2$. So
$A$ is uniformly distributed modulo $d_2'$. Similarly, $A$ is
uniformly distributed modulo $d_1'$. Since $(d_1', d_2')=1$, the set
$A$ is uniformly distributed modulo $d_1'd_2'=d_1d_2/d_3^2$.

Conversely, suppose that $A$ is uniformly distributed modulo
$d_1'd_2'=d_1d_2/d_3^2$. Then $A$ is uniformly distributed modulo
$d_1'$. So
\begin{equation}\label{c8}\sum_{\substack{a\in A \\
d_1'|a-n}}1=\frac{|A|}{d_1'}=\frac{md_3}{2d_1}\end{equation} for
all integers $n$. Since $(k_2', d_1')=1$, it follows that
\begin{equation*}\label{c9}\sum_{\substack{a\in A \\
d_1'|k_2' a-n}}1=\frac{md_3}{2d_1}\end{equation*} for all integers
$n$. That is,
\begin{equation}\label{c9}d_1\sum_{\substack{a\in A \\
d_1|k_2 a-d_3n}}1=\frac12 md_3\end{equation} for all integers $n$.
Similarly, we have
\begin{equation}\label{c10}d_2\sum_{\substack{a\in A \\
d_2|k_1 a-d_3n}}1=\frac12 md_3\end{equation} for all integers $n$.
Since $A$ is uniformly distributed modulo $d_1'd_2'=d_1d_2/d_3^2$,
the set $B=\mathbb{Z}_m\setminus A$ is also uniformly distributed
modulo $d_1'd_2'=d_1d_2/d_3^2$. Similarly, we have
\begin{equation}\label{c11}d_1\sum_{\substack{b\in B \\
d_1|k_2 b-d_3n}}1=\frac12 md_3\quad \text{and} \quad d_2\sum_{\substack{b\in B \\
d_2|k_1 b-d_3n}}1=\frac12 md_3\end{equation} for all integers $n$.
By \eqref{c9}, \eqref{c10}, and \eqref{c11}, we see that
\eqref{c1} holds for all integers $n$. That is, \eqref{c} holds
for all integers $n$ with $d_3|n$. For $d_3\nmid n$, \eqref{c}
holds trivially. So \eqref{c} holds for all $n\in \mathbb{Z}_m$.
Therefore, \eqref{rkk} holds for all $n\in \mathbb{Z}_m$.
\end{proof}

\begin{proof}[Proof of Corollary \ref{cor1}.] Suppose that there exists a set $A\subseteq \mathbb{Z}_m$
such that
$\hat{r}_{k_1,k_2}(A,n)=\hat{r}_{k_1,k_2}(\mathbb{Z}_m\setminus
A,n)$ for all $n\in \mathbb{Z}_m$. By Theorem \ref{mainthm1},
$|A|=m/2$ and $A$ is uniformly distributed modulo $d_1d_2/d_3^2$,
where $(k_1,m)=d_1,(k_2,m)=d_2$, and $(d_1,d_2)=d_3$. So $m$ is
even and $$\frac{d_1d_2}{d_3^2} \Big| \frac m2.$$  That is,
\begin{equation}\label{eqn1}\frac{2d_1d_2}{d_3^2} \Big| m.\end{equation} If $k_1$ and $k_2$ have the
different parities, say $k_1$ is even, then $v_2(d_1)=\min \{
v_2(k_1), v_2(m) \} $ and $v_2(d_2)=v_2(d_3)=0$. By \eqref{eqn1}
we have
$$1+v_2(d_1)=1+v_2(d_1)+v_2(d_2)-2 v_2(d_3)\le v_2(m).$$
So $v_2(k_1)=v_2(d_1)<v_2(m)$.

Conversely, suppose that $m$ is even and one of (i) and (ii) of
Corollary \ref{cor1} holds.

Since $d_1\mid m$, $d_2\mid m$, and $(d_1/d_3, d_2/d_3)=1$, it
follows that $d_1d_2/d_3^2 \mid m$.

If $k_1$ and $k_2$ are both odd, then $d_1/d_3$ and $d_2/d_3$ are
both odd. Noting that $m$ is even, we have that $2d_1d_2/d_3^2
\mid m$.

If $k_1$ and $k_2$ are both even, say $v_2(k_1)\ge v_2(k_2)$, then
$$v_2(d_1)=\min \{ v_2(k_1), v_2(m) \} \ge \min \{ v_2(k_2), v_2(m)
\} =v_2 (d_2).$$ So $v_2(d_3)=v_2(d_2)\ge 1$ and
$$v_2\left( \frac{2d_1d_2}{d_3^2}\right) =1+v_2(d_1)+v_2(d_2)-2
v_2(d_3)\le v_2(d_1)\le v_2(m).$$ Noting that $d_1d_2/d_3^2 \mid
m$, we have that $2d_1d_2/d_3^2 \mid m$.

If $k_1$ and $k_2$ have the different parities, say $k_1$ is even,
then $v_2(d_1)=v_2(k_1)< v_2(m)$ and $v_2(d_2)=v_2(d_3)=0$. Thus
$$v_2\left( \frac{2d_1d_2}{d_3^2}\right)
=1+v_2(d_1)+v_2(d_2)-2 v_2(d_3)=1+v_2(d_1)\le v_2(m).$$ By
$d_1d_2/d_3^2 \mid m$, we have that $2d_1d_2/d_3^2 \mid m$.

Write $d=d_1d_2/d_3^2$. Then $2d\mid m$. Let
$$ A= \bigcup_{i=1}^{d} \left\{ i+ d\ell : \ell =1,\dots , \frac{m}{2d} \right\}.$$
Then $|A|= m/2$ and $A$ is uniformly distributed modulo $d$. By
Theorem \ref{mainthm1},
$\hat{r}_{k_1,k_2}(A,n)=\hat{r}_{k_1,k_2}(\mathbb{Z}_m\setminus
A,n)$ for all $n\in \mathbb{Z}_m$.
\end{proof}

%\newpage

\end{document}